\documentclass[a4,amstex,11pt]{article}

\usepackage{geometry}
\usepackage{amsfonts}
\usepackage{graphicx}
\usepackage{epstopdf}
\usepackage{enumitem}
\usepackage{algorithm}
\usepackage{algorithmic}
\usepackage{amsmath,amsfonts,amssymb,mathrsfs,bm}
\usepackage{amsthm}
\usepackage{subcaption}
\ifpdf
  \DeclareGraphicsExtensions{.eps,.pdf,.png,.jpg}
\else
  \DeclareGraphicsExtensions{.eps}
\fi

\makeatletter
\let\@fnsymbol\@arabic
\makeatother

\usepackage{amsopn}
\usepackage{mathtools}
\usepackage{cleveref}
\Crefname{theorem}{Theorem}{Theorems}
\Crefname{proposition}{Proposition}{Propositions}
\Crefname{lemma}{Lemma}{Lemmas}
\Crefname{corollary}{Corollary}{Corollaries}
\Crefname{remark}{Remark}{Remarks}
\Crefname{example}{Example}{Examples}
\Crefname{algorithm}{Algorithm}{Algorithms}
\Crefname{figure}{Figure}{Figures}
\Crefname{equation}{}{}

\theoremstyle{definition}
\newtheorem{definition}{Definition}[section]
\newtheorem{theorem}[definition]{Theorem}
\newtheorem{proposition}[definition]{Proposition}
\newtheorem{lemma}[definition]{Lemma}

\newtheorem{remark}[definition]{Remark}
\newtheorem{example}[definition]{Example}

\title{Tropical linearization and stability analysis of discrete dynamical systems at the tropical origin
}

\author{
Yuki Nishida\thanks{Kyoto Prefectural University, Kyoto, Japan.
(Email: y-nishida@kpu.ac.jp)
}
\and 
Sennosuke Watanabe\thanks{The University of Fukuchiyama, Fukuchiyama, Japan.}
\and 
Yoshihide Watanabe\thanks{Doshisha University, Kyotanabe, Japan.}
}

\begin{document}
\maketitle

% =====================================

%\renewcommand{\abstractname}{\large \textbf{Abstract}}
\begin{abstract}
The tropical semiring is a semiring of extended real numbers, where the operations of `max' and `+' replace the usual addition and multiplication, respectively. Difference equations obtained from the ultradiscrete limit of discrete dynamical systems are described in terms of the tropical semiring. We propose a tropical linearization approach for the stability analysis of difference equations, including those describing ulradiscrete dynamical systems. We show that the fixed point at the tropical origin is asymptotically stable if the maximum eigenvalue of the tropical Jacobian matrix is negative. On the other hand, it is unstable if the maximum eigenvalue of the tropical Jacobian matrix is positive. Since $0$ is the tropical multiplicative identity, these results are analogous to those in the usual linearization process.
\end{abstract}

% =====================================

\section{Introduction}

The tropical semiring, or the max-plus algebra, is a semiring of extended real numbers $\mathbb{R}_{\max}:= \mathbb{R} \cup \{-\infty\}$, where the operations of `max' and `+' replace the usual addition and multiplication, respectively.
It has been applied to the analysis of discrete event systems such as manufacturing~\cite{Cohen1985} and railway systems~\cite{Vries1998}.
In these systems, the operation `max' has an advantage in representing the synchronization of events, and the evolution can be expressed as tropical linear difference equations.
The tropical semiring is also used to describe ultradiscrete dynamical systems~\cite{Tokihiro1996}, in particular, ultradiscrete integrable systems such as box-ball systems~\cite{Takahashi1990}. 
The tropical discretization, which is a combination of positive-valued discretization and ultradiscrete limiting process, of differential equations also provides ultradiscrete dynamical systems~\cite{Murata2013}.
The asymptotic behaviour of ultradiscrete dynamical systems has been studied for specific cases; for example, the negative feedback model~\cite{Gibo2015} and Sel'kov model~\cite{Yamazaki2024}. However, to the best of our knowledge, the general case has not been exploited. Ultradiscrete dynamical systems are described using piecewise-linear difference equations; therefore, their stability analysis based on differentiation is not applicable. 

As in the cases of usual linear differential or difference equations, tropical linear difference equations can be solved using the spectral theory of matrices. The tropical spectral theory originated in the 1960s~\cite{Green1962} and has been developed in many research including the `Cyclicity Theorem' for irreducible matrices~\cite{Cohen1985} and its extension to reducible matrices~\cite{Butkovic2009}.
These results show that solutions to tropical linear difference equations eventually become periodic, with growth rates given by the eigenvalues.

In this paper, we propose a tropical linearization approach for the stability analysis of difference equations, including non-differentiable ones describing ultradiscrete dynamical systems. Specifically, we focus on the fixed point $\bm{\varepsilon}:= (-\infty,\dots,-\infty)^\top$, which is the tropical origin; this is because the shift of the fixed point is not allowed due to the lack of tropical subtraction. 
We first introduce the tropical linear approximation of a map $g:\mathbb{R}_{\max}^n \to \mathbb{R}_{\max}$ based on the distance $d(x,y) := |e^x - e^y|$, resulting the tropical derivative and tropical Jacobian matrix at $\bm{\varepsilon}$. 
Then, we show that the fixed point $\bm{\varepsilon}$ is asymptotically stable if the maximum eigenvalue of the tropical Jacobian matrix is negative. On the other hand, it is unstable if the maximum eigenvalue of the tropical Jacobian matrix is positive. Since $0$ is the tropical multiplicative identity, these results are analogous to those in the usual linearization process.

In the context of tropical functional analysis, tropical integral has been defined as the supremum of a function~\cite{Kolokoltsov1997}.
On the other hand, tropical differentiation is less studied because the tropical subtraction cannot be defined. 
Some studies define tropical differentiation algebraically as a linear operation that decreases the degree of a monomial by one~\cite{Falkensteiner2023,Grigoriev2017}.
Compared to them, our definition of tropical derivative is analytic and also valid for functions that are not tropical polynomials.
The tropical approximation problem has been developed in literature, e.g., in~\cite{Krivulin2025}, but note that we do not aim at finding a better approximation under some criterion; we just consider the tropical linear approximation that is analytically derived.

The rest of the paper is organized as follows. In Section 2, we introduce basic definitions and results on linear algebra over the tropical semiring. In Section 3, we present the definition of tropical linear approximation and tropical derivative at $\bm{\varepsilon}$. In Section 4, we derive the main results of this paper: asymptotic stability and instability of tropical-linearly approximated difference equations. We also present some examples to which our results can be applied. In Section 5, we conclude the paper with some remarks on the fixed point other than $\bm{\varepsilon}$.

\section{Tropical semiring}
The tropical semiring, or the max-plus algebra, is a set of numbers $\mathbb{R}_{\max}:= \mathbb{R} \cup \{-\infty\}$, where the tropical addition $\oplus$ and multiplication $\otimes$ are defined as 
\begin{align*}
	a \oplus b:= \max(a,b), \qquad a \otimes b:= a+ b
\end{align*}
for $a,b \in \mathbb{R}_{\max}$.
The identities of addition and multiplication are $\varepsilon:= -\infty$ and $0$, respectively.
In the tropical semiring, the subtraction operation cannot be defined because the inverse element for the addition does not exist, and the tropical division is defined as $a \oslash b:= a - b$.
The tropical power $a^{\otimes r}$ for $r \in \mathbb{N}$ is defined as the $r$-fold tropical product, i.e., 
\begin{align*}
	a^{\otimes r} = \underbrace{a \otimes a \otimes \cdots \otimes a}_{r\,\text{times}} = ra.
\end{align*}
This definition is easily extended to $r \in \mathbb{R}$ as $a^{\otimes r} = ra$.
For details on the tropical semiring, refer to textbooks~\cite{Baccelli1992,Butkovic2010,Heidergott2005,Joswig2021,Maclagan2015}.

Let $\mathbb{R}_{\max}^n$ and $\mathbb{R}_{\max}^{m \times n }$ denote the sets of all $n$-dimensional column vectors and $m$-by-$n$ matrices with entries in $\mathbb{R}_{\max}$, respectively.
For $A, B \in \mathbb{R}_{\max}^{m \times n}$, 
the matrix sum $A \oplus B \in \mathbb{R}_{\max}^{m \times n}$ is defined by
\begin{align*}
[A \oplus B]_{ij} = [A]_{ij} \oplus [B]_{ij},
\end{align*}
where $[A]_{ij}$ denotes the $(i,j)$ entry of the matrix $A$.
For $A \in \mathbb{R}_{\max}^{l \times m}$ and $B \in \mathbb{R}_{\max}^{m \times n}$, 
the matrix product $A \otimes B \in \mathbb{R}_{\max}^{l \times n}$ is defined by
\begin{align*}
[A \otimes B]_{ij} = \bigoplus_{k=1}^m [A]_{ik} \otimes [B]_{kj}.
\end{align*}
For $A \in \mathbb{R}_{\max}^{m \times n}$ and $\alpha \in \mathbb{R}_{\max}$,
the scalar multiplication $\alpha \otimes A \in \mathbb{R}_{\max}^{m \times n}$ is defined by
\begin{align*}
[\alpha \otimes A]_{ij} = \alpha \otimes [A]_{ij}.
\end{align*}
These operations can be applied to column vectors by regarding them as one-column matrices.
The tropical zero vector is $\bm{\varepsilon} := (\varepsilon,\dots,\varepsilon)^{\top} \in \mathbb{R}_{\max}^{n}$.
For vectors $\bm{x}=(x_1,x_2,\dots,x_n)^\top, \bm{y}=(y_1,y_2,\dots,y_n)^\top \in \mathbb{R}_{\max}^n$, the inequality $\bm{x} \leq \bm{y}$ means $x_i \leq y_i$ for all $i=1,2,\dots,n$. Other types of inequalities are understood similarly.

Linearity over the tropical semiring is defined from the viewpoint of semimodules.
A nonempty subset $U \subset \mathbb{R}_{\max}^n$ is called a tropical subspace of $\mathbb{R}_{\max}^n$ if $a \otimes \bm{x} \oplus b \otimes \bm{y} \in U$ for any $\bm{x},\bm{y} \in U$ and $a,b \in \mathbb{R}_{\max}$.
For tropical subspaces $U_1 \subset \mathbb{R}_{\max}^n$ and $U_2 \subset \mathbb{R}_{\max}^m$,
a map $f: U_1 \to U_2$ is said to be tropical linear if 
\begin{align*}
	f(a \otimes \bm{x} \oplus b \otimes \bm{y}) = a \otimes f(\bm{x}) \oplus b \otimes f(\bm{y})
\end{align*}
for any $\bm{x},\bm{y} \in U_1$ and $a,b \in \mathbb{R}_{\max}$.
Obviously, $A \in \mathbb{R}_{\max}^{m \times n}$ defines a tropical linear map $T_A: \mathbb{R}_{\max}^n \to \mathbb{R}_{\max}^m$ by $T_A(\bm{x}) = A \otimes \bm{x}$.

For $A \in \mathbb{R}_{\max}^{n \times n}$, a scalar $\lambda \in \mathbb{R}_{\max}$ is called an eigenvalue of $A$ if there exists $\bm{x} \in \mathbb{R}_{\max}^n \setminus \{\bm{\varepsilon}\}$ satisfying
\begin{align*}
	A \otimes \bm{x} = \lambda \otimes \bm{x}.
\end{align*}
Such a vector $\bm{x}$ is an eigenvector of $A$ with respect to $\lambda$.

To understand the eigenvalues of tropical matrices, we use their associated digraphs.  
For $A \in \mathbb{R}_{\max}^{n \times n}$, the weighted digraph $\mathcal{G}(A)$ is defined by the vertex set $V = \{1,2,\dots,n\}$ and the edge set $E = \{ (i,j) \mid [A]_{ij} \neq \varepsilon\}$ with weight $w((i,j)) = [A]_{ij}$ for $(i,j) \in E$.
A path in $\mathcal{G}(A)$ is a sequence of vertices $(i_0,i_1,\dots,i_{\ell})$ such that $(i_{k-1},i_k) \in E$ for $k=1,2,\dots,\ell$;
 if $i_0 = i_\ell$, this path is called a circuit.
The set of edges in a path $\mathcal{P} = (i_0,i_1,\dots,i_{\ell})$ is $E(\mathcal{P}) := \{(i_{k-1},i_k) \mid k=1,2,\dots,\ell\}$.
The length, weight, and average weight of $\mathcal{P}$ are defined as $\ell(\mathcal{P}) := \ell$, $w(\mathcal{P}) := \sum_{k=1}^{\ell} w((i_{k-1},i_k))$ and $\mathrm{ave}(\mathcal{P}) = w(\mathcal{P})/\ell(\mathcal{P})$, respectively.

\begin{theorem}[\cite{Green1979}]
The maximum eigenvalue of $A \in \mathbb{R}_{\max}^{n \times n}$ is identical to the maximum average weight of all circuits in $\mathcal{G}(A)$.
\end{theorem}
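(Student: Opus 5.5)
Let $\lambda^\ast$ denote the maximum average weight over all circuits of $\mathcal{G}(A)$, with $\lambda^\ast=\varepsilon$ if $\mathcal{G}(A)$ is acyclic. The plan is to prove two inequalities: every eigenvalue $\lambda$ satisfies $\lambda\le\lambda^\ast$, and $\lambda^\ast$ itself is an eigenvalue.

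\emph{Upper bound.} Suppose $A\otimes\bm{x}=\lambda\otimes\bm{x}$ with $\bm{x}\neq\bm{\varepsilon}$ and $\lambda\neq\varepsilon$. Choose $i_0$ with $x_{i_0}\neq\varepsilon$. Since $\max_j([A]_{i_0j}+x_j)=\lambda+x_{i_0}>\varepsilon$, some index $i_1$ attains the maximum, and then $[A]_{i_0i_1}\neq\varepsilon$ and $x_{i_1}\neq\varepsilon$. Iterating gives an infinite walk $i_0,i_1,i_2,\dots$ in $\mathcal{G}(A)$ with
\[
[A]_{i_{k-1}i_k}=\lambda+x_{i_{k-1}}-x_{i_k}.
\]
By pigeonhole some vertex repeats, say $i_p=i_q$ with $p<q$. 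Summing along this circuit, the $x$ terms telescope, so the circuit has average weight exactly $\lambda$. Hence $\lambda\le\lambda^\ast$. If $\lambda=\varepsilon$, the bound holds trivially.

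\emph{Attainment.} If $\lambda^\ast=\varepsilon$, then $\mathcal{G}(A)$ is acyclic, so some vertex $j$ has no outgoing edge, i.e.\ row $j$ of $A$ is entirely $\varepsilon$. Take a vertex $k$ with no incoming edge instead, so column $k$ is all $\varepsilon$, and put $\bm{x}=\bm{e}_k$ (entry $0$ at $k$, $\varepsilon$ elsewhere). Then $A\otimes\bm{x}=\bm{\varepsilon}=\varepsilon\otimes\bm{x}$.

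Otherwise set $B=(-\lambda^\ast)\otimes A$. Every circuit of $\mathcal{G}(B)$ then has nonpositive weight, and some critical circuit has weight $0$. Form the Kleene star $B^\ast=I\oplus B\oplus\cdots\oplus B^{\otimes(n-1)}$. Its entries are the maximal path weights, which are finite because there are no positive circuits, and longer paths never help. Pick a vertex $k$ on a critical circuit and let $\bm{x}$ be the $k$-th column of $B^\ast$.

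Now verify $B\otimes\bm{x}=\bm{x}$. We have $[B\otimes B^\ast]_{ik}$ equal to the maximum weight of paths of length at least $1$ from $i$ to $k$, which is at most $[B^\ast]_{ik}$. For $i\neq k$ the two are equal. For $i=k$, $[B^\ast]_{kk}=0$, and the critical circuit through $k$ gives a path of length at least $1$ of weight $0$, so equality holds there too. Since $x_k=0$, $\bm{x}\neq\bm{\varepsilon}$, and therefore $A\otimes\bm{x}=\lambda^\ast\otimes\bm{x}$.

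The main obstacle is this diagonal step $i=k$, together with the justification that truncating $B^\ast$ at length $n-1$ suffices. Both rest on the absence of positive circuits after normalization, so that any path can be shortened by deleting circuits without decreasing its weight.
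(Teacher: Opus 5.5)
The paper gives no proof of this statement: it is quoted from Green's work and used as a black box, so there is no argument in the paper to compare yours against. Your proof is correct and is the standard one. The walk along maximizing indices telescopes to show that every eigenvalue other than $\varepsilon$ is the average weight of some circuit. The $k$-th column of the Kleene star of $(-\lambda^\ast)\otimes A$, with $k$ a critical vertex, gives an eigenvector for $\lambda^\ast$. The acyclic case is handled correctly by a source vertex. Your sentence about a vertex with no outgoing edge is never used there; only the source matters.

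Two points should be tightened.

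First, the entries of $B^\ast$ need not be finite; they equal $\varepsilon$ when no path exists. What you actually need is that the supremum of path weights over all lengths is not $+\infty$ and is attained by a path of length at most $n-1$.

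Second, the paper's circuits may repeat vertices, so the critical circuit through $k$ may have length greater than $n$. It is then not directly seen by $[B\otimes B^\ast]_{kk}=\max_{1\le m\le n}[B^{\otimes m}]_{kk}$. You need the reduction you only allude to at the end. Split a closed walk at $k$ of weight $0$ at its first return to $k$. The remainder is a closed walk, hence has nonpositive weight, so the first-return part has weight at least $0$. Deleting its inner closed sub-walks, each of nonpositive weight, leaves an elementary circuit through $k$ of length at most $n$ and weight at least $0$, hence exactly $0$.

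With that spelled out, the diagonal step and the whole argument are complete. This shortening-by-deleting-circuits fact is the same one the paper invokes at the end of Section 2 and in the proof of Theorem~\ref{thm:main1}.
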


A matrix $A \in \mathbb{R}_{\max}^{n \times n}$ is called irreducible if $\mathcal{G}(A)$ is strongly connected, i.e., for any $i,j \in V$ there exists a path from $i$ to $j$. If $A \in \mathbb{R}_{\max}^{n \times n}$ is irreducible, then $A$ has a unique eigenvalue~\cite{Green1979}. 
In the digraph $\mathcal{G}(A)$, let $V^c(A)$ and $E^c(A)$ be the sets of all vertices and edges, respectively, contained in some maximum average weight circuit. The subgraph $\mathcal{G}^c(A) = (V^c(A), E^c(A))$ is called the critical digraph of $A$. The cyclicity of a strongly connected component of $\mathcal{G}^c(A)$ is the greatest common divisor of the lengths of all circuits contained in that component, and the cyclicity of $A$ is the least common multiple of the cyclicities of all strongly connected components of $\mathcal{G}^c(A)$.

For a square matrix $A \in \mathbb{R}_{\max}^{n \times n}$ and $k \in \mathbb{N}$, the matrix power is defined as
\begin{align*}
	A^{\otimes k} = \underbrace{A \otimes A \otimes \cdots \otimes A}_{k\,\text{times}}.
\end{align*}
By convention, we define $A^{\otimes 0}$ as the tropical identity matrix, that is, the square matrix with zeros on the diagonal and $\varepsilon$ elsewhere.
The following fact, known as the Cyclicity Theorem, states that the sequence of matrix powers becomes periodic.

\begin{theorem}[Cyclicity Theorem, \cite{Cohen1985}]		\label{thm:cyc}
Let $A \in \mathbb{R}_{\max}^{n \times n}$ be an irreducible matrix and $\lambda$ and $\sigma$ be its unique eigenvalue and cyclicity, respectively. Then, there exist an integer $T \geq 1$ such that
\begin{align*}
	A^{\otimes (t+\sigma)} = \lambda^{\otimes \sigma} \otimes A^{\otimes t}
\end{align*}
for any $t \geq T$. 
\end{theorem}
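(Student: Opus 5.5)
We prove the Cyclicity Theorem through the graph interpretation of matrix powers. For $i,j\in V$, the entry $[A^{\otimes k}]_{ij}$ equals the maximum weight of all paths of length exactly $k$ from $i$ to $j$ in $\mathcal{G}(A)$; this follows by induction on $k$ from the definition of $\otimes$. Replacing $A$ by $(-\lambda)\otimes A$ changes every length-$k$ path weight by $-k\lambda$. So we may assume $\lambda=0$: every circuit has nonpositive weight, and the critical circuits have weight $0$. We must then show $A^{\otimes(t+\sigma)}=A^{\otimes t}$ for all large $t$. Here $\lambda$ is finite, since irreducibility with $n\ge 1$ gives at least one circuit. (If $n=1$ and $[A]_{11}=\varepsilon$, the statement degenerates and can be dealt with separately.)

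\textbf{Step 1: the entries are bounded above and eventually attained.} A path of length $k$ decomposes into an elementary path (length $<n$) together with a collection of elementary circuits. Each circuit has weight $\le 0$, so $[A^{\otimes k}]_{ij}\le M_{ij}$, the maximum over elementary paths. Next, for each pair $(i,j)$ let $\mathcal{C}$ be a strongly connected component of $\mathcal{G}^c(A)$ with cyclicity $\sigma_{\mathcal{C}}$. Pick a node $v\in\mathcal{C}$. The lengths of closed walks at $v$ inside $\mathcal{C}$ form a set closed under addition whose gcd is $\sigma_{\mathcal{C}}$; this uses strong connectivity of $\mathcal{C}$ to splice circuits together. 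Hence every sufficiently large multiple of $\sigma_{\mathcal{C}}$ is such a length, and these walks have weight $0$. So for large $k$ in a fixed residue class modulo $\sigma$, we can route $i\to v$, loop at $v$ with weight $0$, then go $v\to j$ (possible by irreducibility), and we obtain a uniform lower bound on $[A^{\otimes k}]_{ij}$ that does not decay.

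\textbf{Step 2: optimal long paths pass through the critical graph.} Let $\delta>0$ be the gap between $0$ and the maximum weight of a non-critical elementary circuit. A path of length $k$ avoiding $V^c(A)$ contains at least $(k-n)/n$ disjoint non-critical elementary circuits, so its weight is at most $M-\delta(k-n)/n$. This tends to $-\infty$ and eventually falls below the lower bound from Step 1. Therefore, for $k\ge T_0$, the maximum defining $[A^{\otimes k}]_{ij}$ is attained only by paths that visit some critical node $v$.

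\textbf{Step 3: periodicity.} Take $k\ge T$ and an optimal path of length $k$ through a critical node $v$ in component $\mathcal{C}$. By Step 1, we can insert a weight-$0$ closed walk at $v$ of length $\sigma$, which is a multiple of $\sigma_{\mathcal{C}}$ and is realizable once $T$ is large. This gives $[A^{\otimes(k+\sigma)}]_{ij}\ge[A^{\otimes k}]_{ij}$. For the reverse inequality, take an optimal path of length $k+\sigma$ through $v$ and decompose it. The entries $[A^{\otimes k}]_{ij}$ take finitely many values: they are sums of an elementary path weight and circuit weights, and they are bounded above and below. Together with monotonicity along the residue class, this shows each residue sequence $k\mapsto[A^{\otimes(k_0+m\sigma)}]_{ij}$ is nondecreasing, bounded, and takes values in a finite set, hence eventually constant. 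Taking the maximum of the thresholds over all $(i,j)$ and all residues gives $T$.

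The main obstacle is making Step 1's insertion argument rigorous: we need a weight-$0$ closed walk of length exactly $\sigma$ (or any large multiple of $\sigma_{\mathcal{C}}$) at every critical node. This requires a numerical-semigroup (Frobenius/Schur) lemma together with the splicing of critical circuits within a strongly connected component. The finiteness-of-values claim in Step 3 also needs care, and I would try to justify it by bounding the number of distinct circuit multiplicities that can be relevant once the weight is above the lower bound.
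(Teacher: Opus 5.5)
The paper quotes this theorem from \cite{Cohen1985} without proof, so your argument has to stand on its own. Your overall plan is the standard one: normalize to $\lambda=0$, show that long optimal paths must visit $V^c(A)$, then exploit weight-$0$ critical closed walks. However, Step~3 as written has a genuine gap.

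\textbf{The gap.} The monotonicity $[A^{\otimes(k+\sigma)}]_{ij}\ge[A^{\otimes k}]_{ij}$ rests on inserting a weight-$0$ closed walk of length exactly $\sigma$ at $v$. Such a walk need not exist. The semigroup of critical closed-walk lengths at $v$ contains all \emph{sufficiently large} multiples of $\sigma_{\mathcal{C}}$, but not necessarily $\sigma$ itself. For example, suppose the only edges are those of $1\to2\to1$ and $1\to2\to3\to1$, all of weight $0$. Then $\lambda=0$ and $\sigma=\gcd(2,3)=1$, yet there is no closed walk of length $1$. Taking $T$ large does not help, because the inserted walk has length $\sigma$ whatever $k$ is. So the claim that each residue sequence is nondecreasing is unproved, and the ``reverse inequality'' paragraph is never carried out.

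\textbf{The repair.} Insertion only gives $[A^{\otimes(k+m\sigma)}]_{ij}\ge[A^{\otimes k}]_{ij}$ for $k\ge T_0$ and all $m\ge m_0$. Here $m_0$ is a Schur threshold chosen uniformly over the finitely many critical nodes, and you use $\sigma_{\mathcal{C}}\mid\sigma$.

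Combine this with your finiteness claim, which is correct. A value $\ge L$ has the form $w(P_0)+\sum_C n_C w(C)$, and it can contain at most $(\max_{P_0} w(P_0)-L)/\delta$ non-critical elementary circuits. Hence only finitely many values occur.

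So along each residue class $k_0+m\sigma$ the maximum $\mu$ is attained at some $m^*$. The entry then equals $\mu$ for every $m\ge m^*+m_0$. No monotonicity and no reverse inequality are needed.

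\textbf{Two smaller points to fill in.}

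First, ``these walks have weight $0$'' requires that every circuit of $\mathcal{G}^c(A)$ is critical. This follows at once from Lemma~\ref{lem:visualize}: telescope $[A]_{ij}+x_j=\lambda+x_i$ around a circuit of critical edges.

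Second, the lower bound in Step~1 holds only in residue classes where $[A^{\otimes k}]_{ij}\neq\varepsilon$. A single route $i\to v\to j$ padded by critical loops covers only one residue class mod $\sigma_{\mathcal{C}}$. To reach every admissible residue, first pad with closed walks at $v$ in all of $\mathcal{G}(A)$. Their lengths have gcd equal to the period of $\mathcal{G}(A)$, which divides $\sigma_{\mathcal{C}}$. In the remaining residue classes the entries are $\varepsilon$ for all $k$, and periodicity is trivial.
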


For $A \in \mathbb{R}_{\max}^{n \times n}$ and $k \in \mathbb{N}$, the $(i,j)$ entry of $A^{\otimes k}$ is identical to the maximum weight of all paths from vertex $i$ to $j$ with length $k$ in $\mathcal{G}(A)$.
If the maximum (average) weight of the circuits in $\mathcal{G}(A)$ is nonpositive, the maximum weight of all paths from $i$ to $j$ is attained by the one with length up to $n$.

\section{Tropical linear approximation}
Let us consider a map $g: \mathbb{R}_{\max}^n \to \mathbb{R}_{\max}$ satisfying $g(\bm{\varepsilon}) = \varepsilon$.
In this section, we describe a tropical linear approximation $g(\bm{x}) \approx \bm{a} \otimes \bm{x}$ at $\bm{\varepsilon}$, where $\bm{a} \in \mathbb{R}_{\max}^{1 \times n}$.

For $\bm{x} = (x_1,x_2,\dots,x_n)^\top, \bm{y} = (y_1,y_2,\dots,y_n)^\top \in \mathbb{R}_{\max}^n$, the distance between them is defined as
\begin{align}
	d(\bm{x}, \bm{y}) := \max_{i =1,2,\dots,n} |e^{x_i} -e^{y_i}|.		\label{eq:dist}
\end{align}
This distance is designed to handle $\varepsilon$ by setting $e^{\varepsilon} = 0$.
The magnitude of $\bm{x}$ is defined as
\begin{align}
	\|\bm{x}\| := d(\bm{x}, \bm{\varepsilon}) = \max_{i =1,2,\dots,n} e^{x_i} = e^{\max_i x_i}.		\label{eq:norm}
\end{align}
Note that $\|\bm{x}\|$ is not a usual norm because
\begin{align}
	\|a \otimes \bm{x}\| = \max_{i =1,2,\dots,n} |e^{a+x_i}| = e^a \|\bm{x}\|	\label{eq:normsp}
\end{align}
for $a \in \mathbb{R}_{\max}$.
We also note that the function $\|\cdot\|$ is monotonic, that is, if $\bm{x} \leq \bm{y}$, then $\|\bm{x}\| \leq \|\bm{y}\|$.

A tropical linear form $\bm{a} \otimes \bm{x}$ given by $\bm{a} \in \mathbb{R}_{\max}^{1 \times n}$ is called a tropical linear approximation of $g(\bm{x})$ at $\bm{\varepsilon}$, expressed as $g(\bm{x}) \approx \bm{a} \otimes \bm{x}$, if 
$d(g(\bm{x}), \bm{a} \otimes \bm{x}) = o(\|\bm{x}\|)$, that is, 
\begin{align*}
	\lim_{\bm{x} \to \bm{\varepsilon}} \frac{d(g(\bm{x}), \bm{a} \otimes \bm{x})}{\|\bm{x}\|} = 0.
\end{align*}
For $\bm{x} = (x_1,x_2,\dots,x_n)^\top \in \mathbb{R}_{\max}^n$, let $\bm{x}(i)$ be the vector whose $i$th entry is $x_i$ and other entries are $\varepsilon$.

\begin{proposition}	\label{thm:linear}
Suppose that $g: \mathbb{R}_{\max}^n \to \mathbb{R}_{\max}$ satisfies $g(\bm{\varepsilon}) = \varepsilon$. If $\bm{a} \otimes \bm{x}$ is a tropical linear approximation of $g(\bm{x})$ at $\bm{\varepsilon}$ for some $\bm{a} = (a_1,a_2,\dots,a_n) \in \mathbb{R}_{\max}^{1 \times n}$, then 
\begin{align*}
	a_i = \lim_{x_i \to \varepsilon} g(\bm{x}(i)) \oslash x_i
\end{align*}
for $i = 1,2,\dots,n$.
\end{proposition}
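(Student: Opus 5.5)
We restrict the defining limit of the tropical linear approximation to the coordinate "axis" $\{\bm{x}(i)\}$ and translate the resulting statement about $d$ into a statement about $g(\bm{x}(i)) - x_i$. Fix $i$ and consider $\bm{x} = \bm{x}(i)$ with $x_i \in \mathbb{R}$, $x_i \to \varepsilon$. Then $\bm{x}(i) \to \bm{\varepsilon}$ in the metric $d$. By the computation in \eqref{eq:norm}, $\|\bm{x}(i)\| = e^{x_i}$, and by definition of the tropical product, $\bm{a} \otimes \bm{x}(i) = a_i \otimes x_i = a_i + x_i$. Since a limit as $\bm{x} \to \bm{\varepsilon}$ persists along any subset accumulating at $\bm{\varepsilon}$, the hypothesis gives
\begin{align*}
	\lim_{x_i \to \varepsilon} \frac{|e^{g(\bm{x}(i))} - e^{a_i + x_i}|}{e^{x_i}} = \lim_{x_i \to \varepsilon} \bigl| e^{g(\bm{x}(i)) - x_i} - e^{a_i} \bigr| = 0,
\end{align*}
with the conventions $e^{\varepsilon}=0$ and $\varepsilon - x_i = \varepsilon$ for finite $x_i$. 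This step is essentially bookkeeping.

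It follows that $e^{g(\bm{x}(i)) \oslash x_i} \to e^{a_i}$ in $[0,\infty)$. The map $t \mapsto e^t$ is a homeomorphism from $\mathbb{R}_{\max}$ (with the topology in which $t \to \varepsilon$ means $t \to -\infty$) onto $[0,\infty)$. Applying its inverse $\log$, we conclude $g(\bm{x}(i)) \oslash x_i \to a_i$. This gives the claimed formula, including existence of the limit.

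The only real obstacle is the degenerate case $a_i = \varepsilon$. There we must argue directly that $e^{g(\bm{x}(i)) - x_i} \to 0$ forces $g(\bm{x}(i)) - x_i \to -\infty$, where values equal to $\varepsilon$ (when $g(\bm{x}(i)) = \varepsilon$) are allowed along the way. This holds because $\log$ is continuous at $0$ into $\mathbb{R}_{\max}$, with limit $-\infty$. We will also remark that $x_i = \varepsilon$ itself is excluded from the limit, so that the division $\oslash$ is always well defined along the way.
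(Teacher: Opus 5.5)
Your proposal is correct and follows the paper's proof essentially step for step. You restrict to the axis $\bm{x}(i)$, use $\|\bm{x}(i)\| = e^{x_i}$ and $\bm{a}\otimes\bm{x}(i) = a_i + x_i$ to obtain $|e^{g(\bm{x}(i))\oslash x_i} - e^{a_i}| \to 0$, and then invert the exponential. Your remarks on the continuity of $\log$ from $[0,\infty)$ into $\mathbb{R}_{\max}$ (which covers $a_i=\varepsilon$) and on excluding $x_i=\varepsilon$ simply make precise the final step that the paper states without comment.
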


\proof
Let us assume that $\bm{a} \otimes \bm{x}$ is a tropical linear approximation of $g(\bm{x})$. 
When $x_j = \varepsilon$ for $j \neq i$, we have $\bm{a} \otimes \bm{x} = \max_{j} (a_j + x_j) = a_i+x_i$ and $\max_{j} x_j = x_i.$
Then, by \eqref{eq:dist} and \eqref{eq:norm}, we have
\begin{align*}
	\frac{d(g(\bm{x}), \bm{a} \otimes \bm{x})}{\|\bm{x}\|} = \frac{|e^{g(\bm{x}(i))} - e^{a_i + x_i}|}{e^{x_i}}.
\end{align*}
Hence, we obtain
\begin{align*}
	\lim_{x_i \to \varepsilon} \frac{|e^{g(\bm{x}(i))} - e^{a_i + x_i}|}{e^{x_i}} = 0,
\end{align*}
which implies
\begin{align*}
	\lim_{x_i \to \varepsilon} |e^{g(\bm{x}(i)) - x_i} - e^{a_i}| = 0.
\end{align*}
Recalling that $g(\bm{x}(i)) - x_i = g(\bm{x}(i)) \oslash x_i$, we have
\begin{align*}
	a_i = \lim_{x_i \to \varepsilon} g(\bm{x}(i)) \oslash x_i.
\end{align*}
As the above argument is valid for $i = 1,2,\dots,n$, we have completed the proof.
\endproof

Based on Proposition~\ref{thm:linear}, 
when $g: \mathbb{R}_{\max}^n \to \mathbb{R}_{\max}$ satisfies $g(\bm{\varepsilon}) = \varepsilon$, we define the tropical derivative of $g$ at $\bm{\varepsilon}$ as
\begin{align*}
	D_{i,\bm{\varepsilon}} g := \lim_{x_i \to \varepsilon} g(\bm{x}(i)) \oslash x_i
\end{align*}
for $i=1,2,\dots,n$.
One may recall that the conventional partial derivative of $\tilde{g}: \mathbb{R}^n \to \mathbb{R}$ at $\bm{p}=(p_1,p_2,\dots,p_n)^\top \in \mathbb{R}^n$ is given by
\begin{align}
	\frac{\partial g}{\partial x_i}(\bm{p}) = \lim_{x_i \to p_i} \frac{\tilde{g}((p_1,\dots,x_i,\dots,p_n)^\top) - \tilde{g}(\bm{p})}{x_i-p_i}	\label{eq:usualpd}
\end{align}
for $i=1,2,\dots,n$. If $\bm{p}=(0,0,\dots,0)^\top$ and $\tilde{g}(\bm{p}) = 0$, then the right-hand side of~\eqref{eq:usualpd} becomes 
\begin{align*}
\lim_{x_i \to 0} \frac{\tilde{g}((0,\dots,x_i,\dots,0)^\top)}{x_i},
\end{align*}
from which our tropical derivative comes.

\begin{example}
Let us consider a tropical polynomial function $g(x) = \bigoplus_{k=1}^m c_k \otimes x^{\otimes k}$. Then, 
\begin{align}
	\lim_{x \to \varepsilon} g(x) \oslash x
	= c_1 \oplus \left( \bigoplus_{k=2}^m c_k \otimes x^{\otimes (k-2)} \right) \otimes x.	\label{eq:poly}
\end{align}
The second term on the right-hand side of~\eqref{eq:poly} converges to $\varepsilon$ as $x \to \varepsilon$. Hence, the tropical linear approximation of $g$ is $g(\bm{x}) \approx c_1 \otimes x$, similar to the usual algebra.
\end{example}

\begin{remark}
The converse of Proposition~\ref{thm:linear} is not true. Indeed, let us consider 
\begin{align*}
	g(x,y) = x \oplus y \oplus (x \otimes y)^{\otimes \frac{1}{3}}.
\end{align*}
It is easily verified that
\begin{align*}
	\lim_{x \to \varepsilon} g(x,\varepsilon) \oslash x = \lim_{y \to \varepsilon} g(\varepsilon,y) \oslash y = 0.
\end{align*}
However, when $x=y$, by setting $\bm{x} = (x,x)^\top$, we have 
\begin{align*}
	\lim_{x \to \varepsilon}\frac{|e^{g(x,x)} - e^{0\otimes x\oplus 0\otimes x}|}{\|\bm{x}\|}
	= \lim_{x \to \varepsilon}\frac{|e^{x \oplus x^{\otimes \frac{2}{3}}} - e^x|}{e^x}
	= \lim_{x \to \varepsilon} |e^{-\frac{1}{3}x} -1|
	= +\infty,
\end{align*}
which means that $g(x,y)$ cannot be tropical-linearly approximated.
\end{remark}

Based on the tropical linear approximation, the following lemma sets an upper bound of $g(\bm{x})$.

\begin{lemma}		\label{lem:ub}
Let $g: \mathbb{R}_{\max}^n \to \mathbb{R}_{\max}$, $\bm{a} \in \mathbb{R}_{\max}^{1 \times n}$ and $\alpha > 0$. If $\bm{x} \in \mathbb{R}_{\max}^n$ satisfies $d(g(\bm{x}), \bm{a} \otimes \bm{x}) \leq \alpha \|\bm{x}\|$, then 
\begin{align}
	g(\bm{x}) \leq \log(1+\sqrt{\alpha}) \otimes \bm{a} \otimes \bm{x} \oplus \log(\alpha+\sqrt{\alpha}) \otimes \bm{0} \otimes \bm{x},	\label{eq:gbound}
\end{align}
where $\bm{0} = (0,\dots,0) \in \mathbb{R}_{\max}^{1 \times n}$.
\end{lemma}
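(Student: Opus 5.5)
Everything reduces to an inequality between real numbers once we exponentiate. Set $u := e^{g(\bm{x})}$, $v := e^{\bm{a}\otimes\bm{x}} = \max_i e^{a_i+x_i}$ and $m := \|\bm{x}\| = e^{\bm{0}\otimes\bm{x}}$. All three are nonnegative, with the convention $e^{\varepsilon}=0$. The hypothesis $d(g(\bm{x}),\bm{a}\otimes\bm{x})\le\alpha\|\bm{x}\|$ then reads $|u-v|\le \alpha m$, and in particular
\begin{align*}
u\le v+\alpha m .
\end{align*}
Since $\exp$ is monotone, the claim \eqref{eq:gbound} is equivalent to
\begin{align*}
u \le \max\bigl((1+\sqrt{\alpha})\,v,\ (\alpha+\sqrt{\alpha})\,m\bigr).
\end{align*}
Here I use \eqref{eq:normsp} to identify $e^{c\otimes\bm{b}\otimes\bm{x}}=e^{c}e^{\bm{b}\otimes\bm{x}}$ for $\bm{b}=\bm{a}$ and $\bm{b}=\bm{0}$.

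The plan is to split into two cases according to how $v$ compares with $\sqrt{\alpha}\,m$. The threshold $\sqrt{\alpha}\,m$ is chosen so that both branches yield exactly the stated constants.

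\emph{Case 1: $v\ge\sqrt{\alpha}\,m$.} Then $\alpha m=\sqrt{\alpha}\cdot\sqrt{\alpha}\,m\le\sqrt{\alpha}\,v$, so $u\le v+\sqrt{\alpha}\,v=(1+\sqrt{\alpha})v$.

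\emph{Case 2: $v<\sqrt{\alpha}\,m$.} Then $u\le \sqrt{\alpha}\,m+\alpha m=(\alpha+\sqrt{\alpha})m$.

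In either case $u$ is bounded by the maximum above. Taking logarithms (with $\log 0=\varepsilon$) and rewriting the sums as tropical products gives \eqref{eq:gbound}.

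The only delicate points are the degenerate values. If $\bm{x}=\bm{\varepsilon}$, then $m=0$ forces $u=v=0$ and both sides equal $\varepsilon$. If $v=0$, i.e.\ $\bm{a}\otimes\bm{x}=\varepsilon$, this falls under Case 2 when $m>0$ and under the previous sentence when $m=0$. The conventions $e^{\varepsilon}=0$ and $\log 0=\varepsilon$ make the translation between the two inequalities valid in all these cases. The main step is really just choosing the splitting threshold $\sqrt{\alpha}\,m$; once that is fixed, the rest is routine.
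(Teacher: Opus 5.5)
Your proof is correct and follows the paper's argument: from the hypothesis you get $e^{g(\bm{x})} \le e^{\bm{a}\otimes\bm{x}} + \alpha e^{\bm{0}\otimes\bm{x}}$, then split according to whether $e^{\bm{a}\otimes\bm{x}}$ lies above or below $\sqrt{\alpha}\,e^{\bm{0}\otimes\bm{x}}$, which yields the constants $1+\sqrt{\alpha}$ and $\alpha+\sqrt{\alpha}$. Your explicit check of the degenerate cases $\bm{x}=\bm{\varepsilon}$ and $\bm{a}\otimes\bm{x}=\varepsilon$ is a small addition that the paper leaves implicit.
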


\proof
Let us assume that $\bm{x} \in \mathbb{R}_{\max}^n$ satisfies
\begin{align*}
	d(g(\bm{x}), \bm{a} \otimes \bm{x}) \leq \alpha \|\bm{x}\|.
\end{align*}
By noting that $d(g(\bm{x}), \bm{a} \otimes \bm{x}) = |e^{g(\bm{x})} - e^{\bm{a} \otimes \bm{x}}|$ and 
$\|\bm{x}\| = e^{\max_i x_i} = e^{\bm{0} \otimes \bm{x}}$, we have
\begin{align*}
	-\alpha e^{\bm{0} \otimes \bm{x}} \leq e^{g(\bm{x})} - e^{\bm{a} \otimes \bm{x}} \leq \alpha e^{\bm{0} \otimes \bm{x}},
\end{align*}
which leads to
\begin{align*}
	e^{g(\bm{x})} \leq e^{\bm{a} \otimes \bm{x}} + \alpha e^{\bm{0} \otimes \bm{x}}.
\end{align*}
\begin{enumerate}
\item[(i)] If $\bm{a} \otimes \bm{x} \geq \log\sqrt{\alpha} \otimes \bm{0} \otimes \bm{x}$, 
that is, $(-\log\sqrt{\alpha} ) \otimes \bm{a} \otimes \bm{x} \geq \bm{0} \otimes \bm{x}$,
then
\begin{align*}
	e^{\bm{0} \otimes \bm{x}} \leq e^{(-\log\sqrt{\alpha} ) \otimes \bm{a} \otimes \bm{x}}
	= e^{(-\log\sqrt{\alpha} ) + (\bm{a} \otimes \bm{x})}
	= e^{\log \sqrt{\alpha}^{-1}} e^{\bm{a} \otimes \bm{x}}
	= (\sqrt{\alpha})^{-1} e^{\bm{a} \otimes \bm{x}}.
\end{align*}
Hence, we obtain
\begin{align*}
	e^{\bm{a} \otimes \bm{x}} + \alpha e^{\bm{0} \otimes \bm{x}}
	\leq (1+ \sqrt{\alpha})e^{\bm{a} \otimes \bm{x}}
	= e^{\log (1+ \sqrt{\alpha})} e^{\bm{a} \otimes \bm{x}}
	= e^{\log(1+\sqrt{\alpha}) \otimes \bm{a} \otimes \bm{x}},
\end{align*}
leading to
\begin{align*}
	g(\bm{x}) \leq \log(1+\sqrt{\alpha}) \otimes \bm{a} \otimes \bm{x}.
\end{align*}

\item[(ii)] If $\bm{a} \otimes \bm{x} \leq \log\sqrt{\alpha} \otimes \bm{0} \otimes \bm{x}$, then
\begin{align*}
	e^{\bm{a} \otimes \bm{x}} \leq e^{\log\sqrt{\alpha} \otimes \bm{0} \otimes \bm{x}}
	= e^{(\log\sqrt{\alpha}) + (\bm{0} \otimes \bm{x})}
	= e^{\log \sqrt{\alpha}} e^{\bm{0} \otimes \bm{x}}
	= \sqrt{\alpha}\, e^{\bm{0} \otimes \bm{x}}.
\end{align*}
Hence, we obtain
\begin{align*}
	e^{\bm{a} \otimes \bm{x}} + \alpha e^{\bm{0} \otimes \bm{x}}
	\leq (\sqrt{\alpha} + \alpha)e^{\bm{0} \otimes \bm{x}}
	= e^{\log (\sqrt{\alpha} + \alpha)} e^{\bm{0} \otimes \bm{x}}
	= e^{\log(\alpha+\sqrt{\alpha}) \otimes \bm{0} \otimes \bm{x}},
\end{align*}
leading to
\begin{align*}
	g(\bm{x}) \leq \log(\alpha+\sqrt{\alpha}) \otimes \bm{0} \otimes \bm{x}.
\end{align*}
\end{enumerate}
Since one of the two cases must occur, \eqref{eq:gbound} is proved.
\endproof

\section{Stability analysis}

Let us consider a map $\bm{f}: \mathbb{R}_{\max}^n \to \mathbb{R}_{\max}^n$ and the dynamical system defined by the following difference equation:
\begin{align}
	\bm{x}^{(t+1)} = \bm{f}(\bm{x}^{(t)}), \qquad t \in \mathbb{Z}_{\geq 0},	\label{eq:dyn}
\end{align}
where $\mathbb{Z}_{\geq 0}$ is the set of nonnegative integers.
We denote the $i$th entry of $\bm{x}^{(t)}$ as $x^{(t)}_i$.
Assume that $\bm{\varepsilon}$ is a fixed point of~\eqref{eq:dyn}, i.e., $\bm{f}(\bm{\varepsilon}) = \bm{\varepsilon}$.
The fixed point $\bm{\varepsilon}$ is said to be stable if for all $\alpha > 0$ there exists $\delta > 0$ such that 
\begin{align*}
	\|\bm{x}^{(0)}\| < \delta \quad \Rightarrow \quad \|\bm{x}^{(t)}\| < \alpha \text{ for all } t \in \mathbb{Z}_{\geq 0};
\end{align*}
otherwise, it is unstable.
Furthermore, $\bm{\varepsilon}$ is said to be asymptotically stable if it is stable and there exists $\delta' > 0$ such that
\begin{align*}
	\|\bm{x}^{(0)}\| < \delta' \quad \Rightarrow \quad \lim_{t \to \infty} \bm{x}^{(t)} = \bm{\varepsilon}.
\end{align*}

The map $\bm{f}$ is represented as a tuple of $n$ functions $(f_1,f_2,\dots,f_n)$ defined by $\bm{f}(\bm{x}) = (f_1(\bm{x}),f_2(\bm{x}),\dots,f_n(\bm{x}))^\top$ for $\bm{x} \in \mathbb{R}_{\max}^n$. The tropical Jacobian matrix $J_{\bm{\varepsilon}}\bm{f} \in \mathbb{R}_{\max}^{n \times n}$ at $\bm{\varepsilon}$ is defined by
\begin{align*}
	[J_{\bm{\varepsilon}}\bm{f}]_{ij} = D_{j,\bm{\varepsilon}} f_i.
\end{align*}
If $f_i$ is tropical-linearly approximated for $i=1,\dots,n$, then $\bm{f}$ is said to be tropical-linearly approximated at $\bm{\varepsilon}$, expressed as 
\begin{align*}
	\bm{f}(\bm{x}) \approx J_{\bm{\varepsilon}} \bm{f} \otimes \bm{x}.
\end{align*}
The main result of this study is presented below.

\begin{theorem}	\label{thm:main1}
Suppose that $\bm{\varepsilon}$ is a fixed point of~\eqref{eq:dyn} and $\bm{f}$ is tropical-linearly approximated at $\bm{\varepsilon}$.
If the maximum eigenvalue of $J_{\bm{\varepsilon}} \bm{f}$ is negative, then $\bm{\varepsilon}$ is asymptotically stable.
\end{theorem}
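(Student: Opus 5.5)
Write $A := J_{\bm{\varepsilon}}\bm{f}$ and let $\lambda<0$ be its maximum eigenvalue. The plan is to bound the true dynamics from above by a tropical linear system with a slightly perturbed matrix that still has negative maximum eigenvalue. Monotonicity of $\otimes$ and of $\|\cdot\|$ then transfers the decay of that linear system to the orbit.

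\emph{Step 1: a local upper bound.} Fix a small $\alpha>0$. Since each $f_i$ is tropical-linearly approximated by the $i$th row of $A$, there is $\delta_0>0$ such that $\|\bm{x}\|<\delta_0$ implies $d(f_i(\bm{x}),[A]_{i\cdot}\otimes\bm{x})\le\alpha\|\bm{x}\|$ for all $i$. Lemma~\ref{lem:ub} then gives, for all such $\bm{x}$,
\begin{align*}
\bm{f}(\bm{x}) \le B_\alpha\otimes\bm{x}, \qquad B_\alpha := \log(1+\sqrt{\alpha})\otimes A \oplus \log(\alpha+\sqrt{\alpha})\otimes E,
\end{align*}
where $E$ is the $n\times n$ matrix with all entries $0$.

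\emph{Step 2: choose $\alpha$ so that $B_\alpha$ has negative maximum eigenvalue.} By the Theorem of~\cite{Green1979}, the maximum eigenvalue of $B_\alpha$ is the maximum average weight of circuits in $\mathcal{G}(B_\alpha)$. Fix a circuit of length $\ell\le n$; it suffices to consider elementary circuits. Each edge of the circuit has weight at most $\max\bigl([A]_{ij}+\log(1+\sqrt\alpha),\ \log(\alpha+\sqrt\alpha)\bigr)$. If every edge takes its weight from $A$, the average is at most $\lambda+\log(1+\sqrt\alpha)$. If some edge takes its weight from the $E$-term, the average is at most
\begin{align*}
\frac{\log(\alpha+\sqrt\alpha)+(\ell-1)\bigl(\max\{\max_{ij}[A]_{ij},0\}+\log(1+\sqrt\alpha)\bigr)}{\ell}.
\end{align*}

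This step is the main obstacle: the finite entries of $A$ may be large positive, so mixed circuits need care. The remedy is that a mixed circuit still uses at least one edge of weight $\log(\alpha+\sqrt\alpha)\to-\infty$. The remaining at most $n-1$ edges have weight bounded by a constant $M$, obtained by taking the max with $0$ over finite entries of $A$ and adding $\log 2$. Hence the average weight of every mixed circuit tends to $-\infty$ as $\alpha\to0$. Choosing $\alpha$ small therefore gives maximum eigenvalue $\mu<0$ for $B_\alpha$. Equivalently, $B_\alpha\le \mu\otimes C$ entrywise, where $C:=(-\mu)\otimes B_\alpha$ has maximum circuit average $0$. Consequently, by the remark after the Cyclicity Theorem, $[C^{\otimes k}]_{ij}\le K$ for all $k$, with $K$ the maximum over paths of length at most $n$.

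\emph{Step 3: invariance and decay.} From $\bm{f}(\bm{x})\le B_\alpha\otimes\bm{x}$ and monotonicity we get $\|\bm{f}(\bm{x})\|\le e^{\max_{ij}[B_\alpha]_{ij}}\|\bm{x}\|$, but this single-step bound need not be contractive. I would argue by induction instead. Suppose $\|\bm{x}^{(s)}\|<\delta_0$ for $s<t$. Then
\begin{align*}
\bm{x}^{(t)}\le B_\alpha^{\otimes t}\otimes\bm{x}^{(0)}\le (t\mu+K)\otimes \bm{0}^{\top}\otimes\bm{x}^{(0)},
\end{align*}
so $\|\bm{x}^{(t)}\|\le e^{K+t\mu}\|\bm{x}^{(0)}\|\le e^{\max(K,0)}\|\bm{x}^{(0)}\|$. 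Given a target $\beta>0$, take $\delta=\min(\beta,\delta_0)e^{-\max(K,0)}$. The induction then keeps the orbit in the region where Step 1 applies, and yields $\|\bm{x}^{(t)}\|<\beta$ for all $t$, which is stability. The same bound $e^{K+t\mu}\|\bm{x}^{(0)}\|\to0$ gives $\bm{x}^{(t)}\to\bm{\varepsilon}$ with $\delta'=\delta_0e^{-\max(K,0)}$, which is asymptotic stability.
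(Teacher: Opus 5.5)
Your proof is correct. Its first half coincides with the paper's: the paper builds the same comparison matrix $\log(1+\sqrt{\alpha})\otimes J_{\bm{\varepsilon}}\bm{f}\oplus\log(\alpha+\sqrt{\alpha})\otimes O$ via Lemma~\ref{lem:ub} and splits circuits into pure and mixed ones as you do. The only difference there is that the paper fixes $\alpha$ through explicit inequalities ($\log(1+\sqrt{\alpha})<-\lambda$ and $\log(\alpha+\sqrt{\alpha})<n\lambda-M_1$, with $M_1=\max(n(M_0-\lambda),0)$ and $M_0$ the largest entry of $J_{\bm{\varepsilon}}\bm{f}$) rather than by a limit.

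The routes diverge in Step 3. The paper bounds every entry of every power of the comparison matrix by the $\alpha$-independent constant $M_1$, which keeps the orbit in the $\delta_0$-ball. It then gets convergence from the Cyclicity Theorem, using that the comparison matrix is irreducible since all its entries are finite. This forces it to identify the exact maximum eigenvalue $\lambda+\log(1+\sqrt{\alpha})$ of the comparison matrix. You instead normalize by $\mu$ and use only the elementary path bound for a matrix with maximum circuit average $0$. That gives in one stroke both the uniform bound and the explicit rate $\|\bm{x}^{(t)}\|\le e^{K+t\mu}\|\bm{x}^{(0)}\|$, and it needs only $\mu<0$, not the value of $\mu$.

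Your version buys three things:
\begin{itemize}
\item it avoids the Cyclicity Theorem entirely;
\item it gives stability for an arbitrary tolerance $\beta$ simply by shrinking $\delta$ with $\alpha$ fixed, whereas the paper lets $\delta_0\le\alpha$ double as the tolerance and relies on $M_1$ not depending on $\alpha$;
\item it covers the degenerate case $\lambda=\varepsilon$ (acyclic $\mathcal{G}(J_{\bm{\varepsilon}}\bm{f})$), where the paper's $M_1$ is no longer finite (or even defined) and the comparison matrix's maximum eigenvalue is not $\lambda+\log(1+\sqrt{\alpha})$.
\end{itemize}
In exchange, the paper's route gives constants written directly in terms of $J_{\bm{\varepsilon}}\bm{f}$.

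One minor notational slip: $(t\mu+K)\otimes\bm{0}^{\top}\otimes\bm{x}^{(0)}$ is ill-typed. It should read $(t\mu+K)\otimes E\otimes\bm{x}^{(0)}$ with your all-zero matrix $E$.
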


\proof
Let $M_0$, $\lambda$, and $\sigma$ be the maximum entry, maximum eigenvalue, and cyclicity of $J_{\bm{\varepsilon}} \bm{f}$, respectively.
Then, by noting that $\lambda < 0$ from our assumption,
we can choose $\alpha > 0$ to be sufficiently small such that $\log(1+\sqrt{\alpha}) < -\lambda$ and $\log(\alpha+\sqrt{\alpha}) < n\lambda-M_1$, where $M_1= \max(n(M_0-\lambda),0)$.
We define
\begin{align*}
	A = \log(1+\sqrt{\alpha}) \otimes J_{\bm{\varepsilon}} \bm{f} \oplus \log(\alpha+\sqrt{\alpha}) \otimes O,	
\end{align*}
where $O$ is the $n$-by-$n$ matrix with all its entries being $0$.
The maximum entry of $A$ is at most $\max(M_0-\lambda,0)$.
Additionally, the maximum eigenvalue of $A$ is negative.
Indeed, let us consider any circuit $\mathcal{C}$ in $\mathcal{G}(A)$.
If none of the edges in $E(\mathcal{C})$ have weight $\log(\alpha+\sqrt{\alpha})$, then all of them should come from finite entries of $\log(1+\sqrt{\alpha}) \otimes J_{\bm{\varepsilon}} \bm{f}$. Hence, the average weight of $\mathcal{C}$ in $\mathcal{G}(A)$ is identical to that in $\mathcal{G}(J_{\bm{\varepsilon}} \bm{f})$ augmented by $\log(1+\sqrt{\alpha})$. This leads to
\begin{align*}
	\mathrm{ave}(\mathcal{C}) \leq \lambda + \log(1+\sqrt{\alpha}) < 0,
\end{align*}
where the equality of the first inequality holds if $\mathcal{C}$ is the maximum average weight circuit in $\mathcal{G}(J_{\bm{\varepsilon}} \bm{f})$.
On the other hand, if the weight of some edge $e \in E(\mathcal{C})$ is $\log(\alpha+\sqrt{\alpha})$, then 
\begin{align*}
	w(\mathcal{C}) = w(e) + \sum_{e' \in E(\mathcal{C}) \setminus \{e\}} w(e') 
	&\leq \log(\alpha+\sqrt{\alpha}) + (n-1) \cdot \max(M_0 - \lambda,0) \\
	&= \log(\alpha+\sqrt{\alpha}) + M_1 - \max(M_0 - \lambda,0) \\
	&< n\lambda \\
	&\leq \ell(\mathcal{C}) \lambda.
\end{align*}
Hence, the average weight of $\mathcal{C}$ is smaller than $\lambda$.
Thus, the maximum eigenvalue of $A$ is $\lambda_1:= \lambda + \log(1+\sqrt{\alpha})$.
Since $\lambda_1 < 0$ from our choice of $\alpha$, the (average) weights of all circuits in $\mathcal{G}(A)$ are negative.
Moreover, by recalling the definition of the cyclicity, the cyclicity of $A$ is $\sigma$ because the set of maximum average weight circuits in $\mathcal{G}(A)$ coincides with that in $\mathcal{G}(J_{\bm{\varepsilon}}\bm{f})$.

Since $\bm{f}$ is tropical-linearly approximated at $\bm{\varepsilon}$, we have 
\begin{align*}
	\lim_{\bm{x} \to \bm{\varepsilon}} \frac{d(f_i(\bm{x}), [J_{\bm{\varepsilon}}\bm{f} \otimes \bm{x}]_i)}{\|\bm{x}\|} = 0
\end{align*}
for all $i=1,2,\dots,n$.
This means that for all $i=1,2,\dots,n$ there exists $\delta_i > 0$ such that
\begin{align*}
	\|\bm{x}\| < \delta_i \quad \Rightarrow \quad d(f_i(\bm{x}), [J_{\bm{\varepsilon}}\bm{f} \otimes \bm{x}]_i) \leq \alpha \|\bm{x}\|.
\end{align*}
In particular, by setting $\delta_0 = \min(\alpha, \delta_1,\delta_2,\dots,\delta_n)$, we have
\begin{align}
	\|\bm{x}\| < \delta_0 \quad \Rightarrow \quad \max_{i=1,2,\dots,n} d(f_i(\bm{x}), [J_{\bm{\varepsilon}}\bm{f} \otimes \bm{x}]_i) \leq \alpha \|\bm{x}\|.
	\label{eq:linapprox0}
\end{align}
Applying Lemma~\ref{lem:ub} to each entry of $\bm{f}(\bm{x})$, we obtain
\begin{align}
	\max_{i=1,2,\dots,n} d(f_i(\bm{x}), [J_{\bm{\varepsilon}}\bm{f} \otimes \bm{x}]_i) \leq \alpha \|\bm{x}\|
	 \quad \Rightarrow \quad 
	 \bm{f}(\bm{x}) \leq A \otimes \bm{x}.			\label{eq:lineval}
\end{align}

Let $\delta = \delta_0e^{-M_1}$ and consider the initial value problem of~\eqref{eq:dyn} starting with any $\bm{x}^{(0)} \in \mathbb{R}_{\max}^n$ such that $\|\bm{x}^{(0)}\| < \delta$.
By induction on $t$, we show that
\begin{align}
	\bm{x}^{(t)} \leq A^{\otimes t} \otimes \bm{x}^{(0)}	\label{eq:ind}
\end{align}
for any $t \in \mathbb{Z}_{\geq 0}$.
The case where $t=0$ is trivial because $A^{\otimes 0}$ is the tropical identity matrix.
Let us assume that \eqref{eq:ind} is satisfied for some $t \in \mathbb{Z}_{\geq 0}$. 
Since the weights of all circuits in $\mathcal{G}(A)$ are negative, the entries of $A^{\otimes t}$ are at most $M_1 = n \cdot \max(M_0-\lambda,0)$, see the last paragraph of Section 2.
Note that this bound $M_1$ is determined only by $J_{\bm{\varepsilon}}\bm{f}$ and does not depend on the value of $\alpha$ used to construct the matrix $A$.
Because of the assumption of induction, the monotonicity of $\|\cdot \|$, and \eqref{eq:normsp},
we have
\begin{align*}
	\|\bm{x}^{(t)}\| \leq \| A^{\otimes t} \otimes \bm{x}^{(0)}\| \leq \| M_1 \otimes \bm{x}^{(0)}\| 
	= e^{M_1} \|\bm{x}^{(0)}\| 
	<e^{M_1}\delta = 
	 \delta_0.
\end{align*}
Using \eqref{eq:linapprox0}, \eqref{eq:lineval} and \eqref{eq:ind}, we obtain
\begin{align*}
	\bm{x}^{(t+1)} = \bm{f}(\bm{x}^{(t)}) \leq A \otimes \bm{x}^{(t)}
	\leq A\otimes (A^{\otimes t} \otimes \bm{x}^{(0)}) = A^{\otimes (t+1)} \otimes \bm{x}^{(0)},
\end{align*}
which proves \eqref{eq:ind} for $t+1$.

We have shown that $\|\bm{x}^{(t)}\| < \delta_0 \leq \alpha$ for all $t \in \mathbb{Z}_{\geq 0}$, which means $\bm{\varepsilon}$ is stable.
Since all entries of $A$ are finite, $A$ is irreducible. By Theorem~\ref{thm:cyc}, there exist $T \geq 1$ such that
\begin{align*}
	A^{\otimes (k\sigma+T+r)} = \lambda_1^{\otimes k\sigma} \otimes A^{\otimes (T+r)}
\end{align*}
for any $k \in \mathbb{Z}_{\geq 0}$ and $r=0,1,\dots,\sigma-1$.
This leads to
\begin{align*}
	\bm{x}^{(k\sigma+T+r)} \leq \lambda_1^{\otimes k\sigma} \otimes A^{\otimes (T+r)} \otimes \bm{x}^{(0)}
\end{align*}
for any $k \in \mathbb{Z}_{\geq 0}$ and $r=0,1,\dots,\sigma-1$.
Because $\lambda_1 < 0$, we have $\lim_{k \to \infty} \bm{x}^{(k\sigma+T+r)} = \bm{\varepsilon}$ for any $r$, which means that $\lim_{t \to \infty} \bm{x}^{(t)} = \bm{\varepsilon}$.
Thus, $\bm{\varepsilon}$ is asymptotically stable.
\endproof

To derive a condition for the instability of the fixed point $\bm{\varepsilon}$, we introduce the following result in the visualization of tropical matrices.

\begin{lemma}[\cite{Sergeev2009}]	\label{lem:visualize}
Let $\lambda \in \mathbb{R}$ be the maximum eigenvalue of $A \in \mathbb{R}_{\max}^{n \times n}$.
Then, there exists a vector $(x_1,x_2,\dots,x_n)^\top \in \mathbb{R}^n$ such that
\begin{align*}
	\begin{cases} [A]_{ij} \otimes x_j 	 = \lambda \otimes x_i, & \text{if } (i,j) \in E^c(A), \\ 
	[A]_{ij} \otimes x_j < \lambda \otimes x_i, & \text{otherwise}. \end{cases} 
\end{align*}
\end{lemma}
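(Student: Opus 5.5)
The plan is to reduce to the case $\lambda = 0$, replace the wanted equalities and strict inequalities by a single subeigenvector inequality for a slightly perturbed matrix, and build that subeigenvector from the Kleene star.

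\textbf{Normalization.} Set $B := (-\lambda) \otimes A$. Since $\lambda \in \mathbb{R}$, every circuit of $\mathcal{G}(B)$ has average weight at most $0$. The critical circuits of $B$ are exactly those of $A$, and they have weight exactly $0$. In terms of $B$, the claim reads
\begin{align*}
[B]_{ij} + x_j = x_i \ \text{ for } (i,j) \in E^c(A), \qquad [B]_{ij} + x_j < x_i \ \text{ otherwise},
\end{align*}
where ``otherwise'' includes pairs with $[B]_{ij} = \varepsilon$, for which the strict inequality is automatic once $\bm{x}$ is finite.

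\textbf{Perturbation off the critical graph.} Let $\eta > 0$ and define $B'$ by $[B']_{ij} = [B]_{ij} + \eta$ for edges $(i,j) \notin E^c(A)$, with $[B']_{ij} = [B]_{ij}$ otherwise. Every elementary circuit of $\mathcal{G}(B)$ that uses a non-critical edge has strictly negative weight, and there are only finitely many elementary circuits. Hence $\eta$ can be chosen so small that all such circuits still have negative weight in $\mathcal{G}(B')$.

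Circuits made only of critical edges keep weight at most $0$. This uses the standard fact that every circuit in $\mathcal{G}^c(A)$ is itself a maximum average weight circuit, which I will cite or prove by the usual argument combining critical circuits. Any circuit decomposes into elementary ones, so all circuits of $\mathcal{G}(B')$ have weight $\le 0$.

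\textbf{Construction of the vector.} Take the Kleene star $B'^{*} = I \oplus B' \oplus \cdots \oplus B'^{\otimes (n-1)}$, where $I$ is the tropical identity matrix, and set $\bm{x} = B'^{*} \otimes \bm{u}$ with $\bm{u} = (0,\dots,0)^\top$.

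\begin{enumerate}
\item[(a)] Because $I \le B'^{*}$, we get $\bm{x} \ge \bm{u}$, and because path weights are bounded, the entries of $\bm{x}$ are finite. So $\bm{x} \in \mathbb{R}^n$.
\item[(b)] Since all circuits of $\mathcal{G}(B')$ have weight $\le 0$, the last paragraph of Section 2 gives $B' \otimes B'^{*} \le B'^{*}$. Hence $B' \otimes \bm{x} \le \bm{x}$.
\end{enumerate}

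\textbf{Reading off the two cases.}
\begin{enumerate}
\item[(c)] For a non-critical edge, $[B]_{ij} + x_j = [B']_{ij} - \eta + x_j \le x_i - \eta < x_i$, which is the strict inequality.
\item[(d)] For a critical edge $(i,j)$, we have $[B]_{ij} + x_j \le x_i$. Choose a critical circuit $\mathcal{C}$ containing $(i,j)$ and sum these inequalities along $E(\mathcal{C})$. The $x$-terms telescope, and the left side reduces to $w(\mathcal{C}) = 0$. So a sum of nonpositive quantities equals $0$, each inequality on $\mathcal{C}$ is an equality, and in particular $[B]_{ij} + x_j = x_i$.
\end{enumerate}
Translating back with $B = (-\lambda) \otimes A$ gives the statement.

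The main obstacle I expect is the perturbation step. It needs the fact that circuits formed from critical edges are not only nonpositive but never strictly heavier after perturbation, and that the choice of $\eta$ is uniform over all circuits. If citing the closedness of the critical graph feels too heavy, I would instead argue directly that every elementary circuit containing a non-critical edge has negative weight, and note that only elementary circuits matter for the Kleene star bound.
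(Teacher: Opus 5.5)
Your proof is correct. The paper gives no proof of this lemma; it imports it from Sergeev (2009), so there is no in-paper argument to compare against. Your construction has four steps: normalize to $\lambda=0$, raise the non-critical finite entries by a small $\eta>0$, apply the Kleene star of the perturbed matrix to a finite vector, and then read off strictness from the $\eta$ margin and equality from telescoping around a critical circuit. This is essentially the standard strict-visualization argument from that literature. Its merit is that it is self-contained: it uses only the definition of $E^c(A)$, the max-eigenvalue/max-cycle-mean identification, and the circuit-removal bound behind the Kleene star.

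Two small points. First, the step you flag as the main obstacle is not one. Circuits made only of critical edges are untouched by the perturbation, and their weight is $\le 0$ simply because every circuit of $\mathcal{G}(B)$ has weight $\le 0$. So the closedness of the critical graph is never needed. The only property of $E^c(A)$ you use is the one you already state: an elementary circuit containing a non-critical edge cannot have weight $0$, since it would then be a maximum average weight circuit and all its edges would be critical. Finiteness of the set of elementary circuits then gives a uniform $\eta$.

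Second, (b) needs $B'^{\otimes n}\le I\oplus B'\oplus\cdots\oplus B'^{\otimes(n-1)}$. This is slightly sharper than the remark in Section 2, which only speaks of paths of length up to $n$, so state it directly. A length-$n$ path contains a circuit of nonpositive weight, and removing it leaves a path of length at most $n-1$ (the empty path when $i=j$, covered by $I$).

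In (d), the chosen critical circuit $\mathcal{C}$ need not be elementary, but the telescoping still works with edge multiplicities. Every edge of $\mathcal{C}$ is critical, so $B'=B$ there. The slacks $x_i-[B]_{ij}-x_j$ are then nonnegative and sum to $-w(\mathcal{C})=0$, so each one vanishes.
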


\begin{theorem}	\label{thm:main2}
Suppose that $\bm{\varepsilon}$ is a fixed point of~\eqref{eq:dyn} and $\bm{f}$ is tropical-linearly approximated at $\bm{\varepsilon}$. If the maximum eigenvalue of $J_{\bm{\varepsilon}} \bm{f}$ is positive, then $\bm{\varepsilon}$ is unstable.
\end{theorem}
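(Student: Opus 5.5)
We exhibit, for every $\delta>0$, an initial value with $\|\bm{x}^{(0)}\|<\delta$ whose orbit leaves a fixed neighbourhood of $\bm{\varepsilon}$. The mechanism is a lower bound: along a suitably chosen direction, $\bm{f}$ raises the coordinates by at least a fixed positive amount.

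Let $J=J_{\bm{\varepsilon}}\bm{f}$ and let $\lambda>0$ be its maximum eigenvalue. Apply Lemma~\ref{lem:visualize} to $J$ to obtain $\bm{v}=(v_1,\dots,v_n)^\top\in\mathbb{R}^n$. Since $J$ has a finite maximum eigenvalue, there is a critical circuit, so $V^c(J)\neq\emptyset$. The lemma does not give $J\otimes\bm{v}=\lambda\otimes\bm{v}$ on all of $\mathbb{R}^n$, so we restrict to vectors supported on the critical vertices:
\begin{align*}
	S=\{\bm{x} : x_j = c + v_j \text{ for } j\in V^c(J),\ x_j=\varepsilon \text{ otherwise},\ c\in\mathbb{R}\}.
\end{align*}
Every critical vertex $i$ has an outgoing critical edge $(i,j)$ with $j\in V^c(J)$. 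Hence for $\bm{x}\in S$ and $i\in V^c(J)$ we get $[J\otimes\bm{x}]_i=\lambda+x_i$, using the equality part of the lemma for critical edges and the strict inequality for all other edges.

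Next we turn the linear approximation into a lower bound. Fix a small $\alpha>0$ and take $\delta_0$ as in the proof of Theorem~\ref{thm:main1}, so that $|e^{f_i(\bm{x})}-e^{[J\otimes\bm{x}]_i}|\le\alpha\|\bm{x}\|$ whenever $\|\bm{x}\|<\delta_0$. For $\bm{x}\in S$ we have $\|\bm{x}\|=e^{c+\max_{j\in V^c}v_j}$, and for $i\in V^c(J)$ the quantity $e^{[J\otimes\bm{x}]_i}=e^{\lambda+c+v_i}$ is comparable to it. Let $K=\max_{j\in V^c}v_j-\min_{j\in V^c}v_j$ and choose $\alpha<e^{\lambda-K}/2$. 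Then
\begin{align*}
	f_i(\bm{x})\ge x_i+\lambda+\log\!\bigl(1-\alpha e^{K-\lambda}\bigr)\ge x_i+\mu, \qquad i\in V^c(J),
\end{align*}
where $\mu=\lambda-\log 2>0$ once $\lambda>\log 2$.

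The restriction $\lambda>\log 2$ is not essential; it is removed by sharpening the choice of $\alpha$. Pick $\alpha$ so small that $\log(1-\alpha e^{K-\lambda})>-\lambda/2$; then the bound holds with $\mu=\lambda/2>0$. This lower bound is the main technical point. It relies on the fact that along $S$ the linear part at a critical coordinate is of the same order as $\|\bm{x}\|$. That is exactly why we restrict to the critical support and use $\bm{v}$ from Lemma~\ref{lem:visualize} rather than an arbitrary vector.

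The last step is iteration, and here lies the main obstacle: after one step $\bm{x}^{(1)}$ is no longer in $S$, because non-critical coordinates may become finite and critical coordinates may grow unequally. We therefore switch from $S$ to the cone
\begin{align*}
	\bm{x}\ge \bm{y}(c), \qquad \bm{y}(c)\in S.
\end{align*}
For each critical $i$ and each $\bm{x}\ge\bm{y}(c)$ we want $f_i(\bm{x})\ge c+v_i+\mu$. The linear part gives $[J\otimes\bm{x}]_i\ge[J\otimes\bm{y}(c)]_i=\lambda+c+v_i$ by monotonicity of $\otimes$. The error term is bounded by $\alpha\|\bm{x}\|$, and $\|\bm{x}\|$ is now not controlled by $c$, so a naive estimate fails.

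We resolve this by a dichotomy. As long as $\|\bm{x}^{(t)}\|<\delta_1:=\min(\delta_0,\ldots)$, we only need a weaker conclusion, namely that $\max_{i\in V^c}(x^{(t)}_i-v_i)$ grows by at least $\mu$ per step. Take the index $i$ attaining this maximum, and set $c_t=\max_{i\in V^c}(x^{(t)}_i-v_i)$, so that $\bm{x}^{(t)}\ge\bm{y}(c_t)$ on the critical support. If the non-critical coordinates stay comparable, i.e. $\|\bm{x}^{(t)}\|\le e^{c_t+L}$ for a fixed $L$, the estimate above goes through with $K$ replaced by $L$ and $\alpha$ chosen accordingly. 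If instead $\|\bm{x}^{(t)}\|$ is large compared with $e^{c_t}$, that is already growth in magnitude. We expect this case analysis to need care.

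Given the dichotomy, $c_t\ge c_0+t\mu$ until $\|\bm{x}^{(t)}\|$ reaches $\delta_1$, which must happen in finitely many steps. Since $c_0$ can be taken arbitrarily negative, every neighbourhood of $\bm{\varepsilon}$ contains an initial value whose orbit escapes the ball of radius $\delta_1$. Hence $\bm{\varepsilon}$ is unstable.
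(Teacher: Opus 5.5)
There is a genuine gap. Your one-step lower bound on $S$ is correct, but the iteration, which is the real content of the theorem, is left open, and the dichotomy you sketch does not close it. Two problems:

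First, with $c_t=\max_{i\in V^c}(x^{(t)}_i-v_i)$ you do not get $\bm{x}^{(t)}\ge\bm{y}(c_t)$ on the critical support. That would need the \emph{minimum} over $V^c$. A minimum-based argument would then also have to keep the spread among critical coordinates bounded over the unboundedly many steps needed, and you do not address that.

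Second, and more seriously, the branch ``$\|\bm{x}^{(t)}\|$ large compared with $e^{c_t}$'' is not escape from the $\delta_1$-ball. It is only a relative statement, and in that branch your lower bound is lost. So $c_t$ may stall or decrease while the orbit stays small. Nothing in the proposal excludes this, and it is exactly what non-critical coordinates that become finite after one step could cause.

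The missing idea is an invariant that rules out the bad branch altogether. Track $M_t=\max_{i}(x^{(t)}_i-v_i)$ over \emph{all} indices, not just critical ones. Then the norm is controlled for free: $\max_i x^{(t)}_i\le M_t+\max_i v_i$. Now prove by induction that $M_t$ is attained at some $i_1\in V^c$.

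\textbf{Growth.} Use an \emph{incoming} critical edge $(i_2,i_1)\in E^c$, not the outgoing edges you used. This gives $[J\otimes\bm{x}^{(t)}]_{i_2}\ge [J]_{i_2i_1}+x^{(t)}_{i_1}=\lambda+v_{i_2}+M_t$. This quantity is at least $\log\|\bm{x}^{(t)}\|$ plus the fixed constant $\min_{(i,j)\in E^c}([J]_{ij}+v_j)-\max_j v_j$. Hence for small $\alpha$ the error $\alpha\|\bm{x}^{(t)}\|$ costs only $\log(1-\sqrt{\alpha})>-\eta/2$, and $M_{t+1}>M_t+\lambda-\eta/2$.

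\textbf{Keeping the maximum on $V^c$.} Use the strict part of Lemma~\ref{lem:visualize}. For $i\notin V^c$ no edge $(i,j)$ is critical. Set $\eta=\min\bigl(\lambda,\min_{(i,j)\notin E^c}(\lambda+v_i-[J]_{ij}-v_j)\bigr)$. Then $[J\otimes\bm{x}^{(t)}]_i-v_i\le\lambda-\eta+M_t$. Now apply the \emph{upper} bound of Lemma~\ref{lem:ub}, with $\alpha$ chosen so that $\log(1+\sqrt{\alpha})<\eta/2$ and $\log(\alpha+\sqrt{\alpha})<\min_j v_j-\max_j v_j$. This gives $x^{(t+1)}_i-v_i\le\max(\lambda-\eta/2+M_t,\;M_t)=\lambda-\eta/2+M_t<x^{(t+1)}_{i_2}-v_{i_2}$, using $\lambda>0$.

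This is how the paper closes the argument; it starts from $\rho\otimes\hat{\bm{x}}$ and argues by contradiction with stability. Your restriction of the initial data to $S$ is harmless. But without this invariant, which needs both the $\eta$-gap off $E^c$ and the upper-bound lemma, the proof is incomplete.
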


\proof
Let $\lambda > 0$ be the maximum eigenvalue of $J_{\bm{\varepsilon}} \bm{f}$. For brevity, we write $V^c = V^c(J_{\bm{\varepsilon}} \bm{f})$ and $E^c = E^c(J_{\bm{\varepsilon}} \bm{f})$.
By Lemma~\ref{lem:visualize}, there exists a vector $\hat{\bm{x}} = (\hat{x}_1,\hat{x}_2,\dots,\hat{x}_n)^\top \in \mathbb{R}^n$ such that
\begin{align}
	\begin{cases}
	[J_{\bm{\varepsilon}} \bm{f}]_{ij} \otimes \hat{x}_j  = \lambda \otimes \hat{x}_i, & \text{if } (i,j) \in E^c, \\ 
	[J_{\bm{\varepsilon}} \bm{f}]_{ij} \otimes \hat{x}_j < \lambda \otimes \hat{x}_i, & \text{otherwise}. \end{cases} 	\label{eq:visualize}
\end{align}
Let us define
\begin{align}
	\eta = \min\left(\lambda,\ \min_{(i,j) \not\in E^c} ((\lambda \otimes \hat{x}_i) - ([J_{\bm{\varepsilon}} \bm{f}]_{ij} \otimes \hat{x}_j))\right).	\label{eq:defeta}
\end{align}
We choose $\alpha > 0$ to be sufficiently small such that 
\begin{align}
&\log(1+\sqrt{\alpha}) < \frac{\eta}{2},		\label{eq:alpha1u}\\
&\log (1-\sqrt{\alpha}) > -\frac{\eta}{2},	\label{eq:alpha2u}\\
&\log\sqrt{\alpha} < \min_{(i,j) \in E^c} ([J_{\bm{\varepsilon}} \bm{f}]_{ij} +\hat{x}_j)- \max_i \hat{x}_i,	\label{eq:alpha3u}\\
&\log(\alpha+\sqrt{\alpha}) < \min_i \hat{x}_i-\max_i \hat{x}_i.	\label{eq:alpha4u}
\end{align}

Since $\bm{f}$ is tropical-linearly approximated at $\bm{\varepsilon}$, there exists $\delta_0 > 0$ such that
\begin{align}
	\|\bm{x}\| < \delta_0 \quad \Rightarrow \quad 
	\max_{i=1,2,\dots,n} d(f_i(\bm{x}), [J_{\bm{\varepsilon}}\bm{f} \otimes \bm{x}]_{i})
	 \leq \alpha \|\bm{x}\|.
	\label{eq:linapprox}
\end{align}
Contrary to the assertion of the theorem, let us assume that $\bm{\varepsilon}$ is a stable fixed point. Then, there exists $\delta > 0$ such that 
\begin{align}
	\|\bm{x}^{(0)}\| < \delta \quad \Rightarrow \quad \|\bm{x}^{(t)}\| < \delta_0	 \text{ for all } t \in \mathbb{Z}_{\geq 0}.	\label{eq:oncont}
\end{align}

By choosing a sufficiently small $\rho \in \mathbb{R}$,
we take an initial vector $\bm{x}^{(0)} := \rho \otimes \hat{\bm{x}}$ such that $\|\bm{x}^{(0)}\| < \delta$.
Since $\lambda-\eta/2 \geq \lambda/2 > 0$, the inequality
\begin{align}
	\max_{i=1,2,\dots,n} (x^{(t)}_i -\hat{x}_i) \geq \left(\lambda-\frac{\eta}{2}\right)t + \rho	,\qquad t \in \mathbb{Z}_{\geq 0}, \label{eq:unstable}
\end{align}
will lead to $\lim_{t \to \infty} \max_i x^{(t)}_i = \infty$, which contradicts the stability.
Now, we show~\eqref{eq:unstable} together with the fact that the maximum value on the left-hand side is attained by some $i \in V^c$ by induction on $t$.

The case where $t=0$ is trivial because $x_i^{(0)} - \hat{x}_i = \rho$ for all $i=1,2,\dots,n$.
Let us assume that the claim is true for some $t \in \mathbb{Z}_{\geq 0}$.
Let $i_1 \in V^c$ be an index $i$ that maximizes $x^{(t)}_{i} -\hat{x}_{i}$. Then, $(i_2,i_1) \in E^c$ for some $i_2 \in V^c$. 
Note that 
\begin{align}
	\max_i x^{(t)}_i - \max_i \hat{x}_i \leq \max_i (x^{(t)}_i - \hat{x}_i) = x^{(t)}_{i_1} - \hat{x}_{i_1}.		\label{eq:max}
\end{align}
Combining the above inequality with \eqref{eq:alpha3u}, we obtain
\begin{align*}
	[J_{\bm{\varepsilon}} \bm{f} \otimes \bm{x}^{(t)}]_{i_2} &\geq [J_{\bm{\varepsilon}} \bm{f}]_{i_2i_1} \otimes x^{(t)}_{i_1} \\
	&\geq ([J_{\bm{\varepsilon}} \bm{f}]_{i_2i_1} +\hat{x}_{i_1}- \max_i \hat{x}_i) + \max_i x^{(t)}_i \\
	&> \log \sqrt{\alpha} + \max_i x^{(t)}_i.
\end{align*}
Using \eqref{eq:linapprox} and \eqref{eq:oncont} and noting that $\bm{f}(\bm{x}^{(t)}) = \bm{x}^{(t+1)}$, we have 
$d(x_{i}^{(t+1)}, [J_{\bm{\varepsilon}}\bm{f} \otimes \bm{x}^{(t)}]_{i})  \leq \alpha \|\bm{x}^{(t)}\|$ for all $i=1,2,\dots,n$, yielding
\begin{align*}
	-\alpha e^{\max_i x^{(t)}_i} \leq e^{x_{i_2}^{(t+1)}} - e^{[J_{\bm{\varepsilon}} \bm{f} \otimes \bm{x}^{(t)}]_{i_2}} \leq \alpha e^{\max_i x^{(t)}_i}.
\end{align*}
This leads to
\begin{align*}
	e^{x_{i_2}^{(t+1)}} &\geq e^{[J_{\bm{\varepsilon}} \bm{f} \otimes \bm{x}^{(t)}]_{i_2}} - \alpha e^{\max_i x^{(t)}_i} \\
	&> e^{[J_{\bm{\varepsilon}} \bm{f} \otimes \bm{x}^{(t)}]_{i_2}} - \alpha e^{(-\log\sqrt{\alpha}) + [J_{\bm{\varepsilon}} \bm{f} \otimes \bm{x}^{(t)}]_{i_2}} \\
	&= e^{[J_{\bm{\varepsilon}} \bm{f} \otimes \bm{x}^{(t)}]_{i_2}} - \sqrt{\alpha}\, e^{[J_{\bm{\varepsilon}} \bm{f} \otimes \bm{x}^{(t)}]_{i_2}} \\
	& = (1-\sqrt{\alpha}) e^{[J_{\bm{\varepsilon}} \bm{f} \otimes \bm{x}^{(t)}]_{i_2}}  \\
	&= e^{\log(1-\sqrt{\alpha}) + [J_{\bm{\varepsilon}} \bm{f} \otimes \bm{x}^{(t)}]_{i_2}}.
\end{align*}
Hence, from \eqref{eq:visualize} and \eqref{eq:alpha2u}, we obtain
\begin{align*}
	x^{(t+1)}_{i_2} &> \log(1-\sqrt{\alpha}) + [J_{\bm{\varepsilon}} \bm{f} \otimes \bm{x}^{(t)}]_{i_2} \\
	&> \left(-\frac{\eta}{2}\right) + [J_{\bm{\varepsilon}} \bm{f}]_{i_2i_1} + x^{(t)}_{i_1} \\
	& = \left(-\frac{\eta}{2}\right) + ([J_{\bm{\varepsilon}} \bm{f}]_{i_2i_1} \otimes \hat{x}_{i_1}) - \hat{x}_{i_1} + x^{(t)}_{i_1} \\
	&= \left(-\frac{\eta}{2}\right) + (\lambda \otimes \hat{x}_{i_2}) + (x^{(t)}_{i_1} - \hat{x}_{i_1}),
\end{align*}
which implies
\begin{align}
	x^{(t+1)}_{i_2} - \hat{x}_{i_2} &> \left(\lambda-\frac{\eta}{2}\right) + ( x^{(t)}_{i_1} - \hat{x}_{i_1}). 		\label{eq:tinc} 
\end{align}
By induction, we obtain
\begin{align*}
	x^{(t+1)}_{i_2} - \hat{x}_{i_2}\geq \left(\lambda-\frac{\eta}{2}\right)(t+1) + \rho, 
\end{align*}
which proves \eqref{eq:unstable} for $t+1$.

We next prove that the maximum of $x^{(t+1)}_i -\hat{x}_i$ is attained by some $i \in V^c$. 
For any $i \not\in V^c$ and $j = 1,2,\dots,n$, we have
\begin{align*}
	([J_{\bm{\varepsilon}} \bm{f}]_{ij} \otimes x^{(t)}_j) - \hat{x}_i
	&=  ([J_{\bm{\varepsilon}} \bm{f}]_{ij} \otimes \hat{x}_j)  - \hat{x}_j + x^{(t)}_j - \hat{x}_i \\
	&\leq ((\lambda \otimes \hat{x}_i) - \eta) - \hat{x}_j + x^{(t)}_j - \hat{x}_i \\
	&= (\lambda -\eta) + (x^{(t)}_j - \hat{x}_j).
\end{align*}
because of the fact that $(i,j) \not\in E^c$ and the definition of $\eta$ in~\eqref{eq:defeta}.
By taking the maximum for $j=1,2,\dots,n$, we have
\begin{align}
	 [J_{\bm{\varepsilon}} \bm{f} \otimes \bm{x}^{(t)}]_i - \hat{x}_i \leq (\lambda -\eta) + \max_j (x^{(t)}_j - \hat{x}_j).	\label{eq:noncri}
\end{align}
Recall that $\|\bm{x}^{(t)}\| < \delta_0$ induces $d(f_i(\bm{x}^{(t)}), [J_{\bm{\varepsilon}}\bm{f} \otimes \bm{x}^{(t)}]_{i})  \leq \alpha \|\bm{x}^{(t)}\|$.
Combining Lemma~\ref{lem:ub} with \eqref{eq:alpha1u} and \eqref{eq:alpha4u}, we obtain
\begin{align*}
	x^{(t+1)}_i 
	= f_i(\bm{x}^{(t)})
	\leq \frac{\eta}{2} \otimes [J_{\bm{\varepsilon}} \bm{f} \otimes \bm{x}^{(t)}]_i  
	\oplus (\min_j \hat{x}_j - \max_j \hat{x}_j) \otimes (\bm{0} \otimes \bm{x}^{(t)}).
\end{align*}
Then, using \eqref{eq:max}, \eqref{eq:tinc} and \eqref{eq:noncri}, we obtain
\begin{align*}
	x^{(t+1)}_i - \hat{x}_i 
	&\leq \left(\frac{\eta}{2} + [J_{\bm{\varepsilon}} \bm{f} \otimes \bm{x}^{(t)}]_i - \hat{x}_i \right) 
	\oplus (\min_j \hat{x}_j - \max_j \hat{x}_j + \max_j x^{(t)}_j - \hat{x}_i )\\
	& \leq \left(\frac{\eta}{2} + (\lambda -\eta) + \max_j (x^{(t)}_j - \hat{x}_j) \right)
	\oplus (x^{(t)}_{i_1} - \hat{x}_{i_1} + \min_j \hat{x}_j  - \hat{x}_i) \\
	& \leq \left(\lambda -\frac{\eta}{2} + (x^{(t)}_{i_1} - \hat{x}_{i_1}) \right) \oplus  (x^{(t)}_{i_1} - \hat{x}_{i_1}) \\
	&= \left(\lambda -\frac{\eta}{2} \right) + (x^{(t)}_{i_1} - \hat{x}_{i_1}) \\
	&< x^{(t+1)}_{i_2} - \hat{x}_{i_2}. 
\end{align*}
Considering that the above inequality holds for all $i \not\in V^c$, we proved that $\max_i (x^{(t+1)}_i - \hat{x}_i)$ is attained by some $i \in V^c$.
Thus, the claim is also true for $t+1$.
By induction, we have proved \eqref{eq:unstable} for all $t \in \mathbb{Z}_{\geq 0}$.
This completes the proof of the theorem.
\endproof

We demonstrate Theorems~\ref{thm:main1} and~\ref{thm:main2} in some examples.

\begin{example}
Let us consider a difference equation
\begin{align*}
	\begin{pmatrix} x^{(t+1)} \\ y^{(t+1)} \end{pmatrix} 
	= \begin{pmatrix} (x^{(t)} \oplus a) \otimes y^{(t)} \\ 1 \otimes x^{(t)} \oplus (-1) \otimes y^{(t)} \end{pmatrix}.
\end{align*}
As each entry of $\bm{f}(x,y):= ((x \oplus a) \otimes y, 1 \otimes x \oplus (-1) \otimes y)^\top$ is a tropical polynomial function, it is tropical-linearly approximated as
\begin{align*}
	\bm{f}(x,y) \approx \begin{pmatrix} \varepsilon & a \\ 1 & -1 \end{pmatrix} \otimes \begin{pmatrix} x \\ y \end{pmatrix}.
\end{align*}
Since the maximum eigenvalue of $\begin{pmatrix} \varepsilon & a \\ 1 & -1 \end{pmatrix}$ is $\max( (a+1)/2, -1)$, the fixed point $\bm{\varepsilon}$ is asymptotically stable if $a < -1$ and unstable if $a > -1$.
\end{example}

\begin{example}
Let us define
\begin{align*}
	H_1(X,Y) &=   X \otimes (X \oplus T \otimes F_1(X,Y)) \oslash (X \oplus T \otimes G_1(X,Y)), \\
	H_2(X,Y) &= Y \otimes (Y \oplus T \otimes F_2(X,Y)) \oslash (Y \oplus T \otimes G_2(X,Y)),
\end{align*}
where $T$ is a constant, and consider a difference equation
\begin{align}
	\begin{pmatrix}  X^{(t+1)} \\ Y^{(t+1)} \end{pmatrix}
	= \begin{pmatrix} H_1(X^{(t)},Y^{(t)})  \\	H_2(X^{(t)},Y^{(t)}) \end{pmatrix}.
	\label{eq:example2}
\end{align}
This kind of system is derived through the tropical discretization of differential equations
\begin{align*}
	\begin{cases} \displaystyle \frac{dx}{dt} =f_1(x,y)-g_1(x,y), \\[7pt] \displaystyle  \frac{dy}{dt} = f_2(x,y)-g_2(x,y). \end{cases}
\end{align*}
See \cite{Murata2013} for details on tropical discretization. 

We assume that $F_i(X,Y)$ and $G_i(X,Y)$ converge to $F_i(\varepsilon,\varepsilon)$ and $G_i(\varepsilon,\varepsilon)$, respectively, for $i=1,2$ as $(X,Y) \to (\varepsilon,\varepsilon)$.
If $\bm{\varepsilon}$ is a fixed point of~\eqref{eq:example2}, then $G_i(\varepsilon,\varepsilon) \neq \varepsilon$ for $i=1,2$.
The tropical derivatives at $\bm{\varepsilon}$ are computed as
\begin{align*}
	D_{X,\bm{\varepsilon}}H_1(X,Y) &= F_1(\varepsilon,\varepsilon) \oslash G_1(\varepsilon,\varepsilon), \\
	D_{Y,\bm{\varepsilon}}H_2(X,Y) &= F_2(\varepsilon,\varepsilon) \oslash G_2(\varepsilon,\varepsilon), \\
	D_{Y,\bm{\varepsilon}}H_1(X,Y) &= D_{X,\bm{\varepsilon}}H_2(X,Y) = \varepsilon,
\end{align*}
and $H_1(X,Y)$ and $H_2(X,Y)$ are tropical-linearly approximated as
\begin{align*}
	\begin{pmatrix} H_1(X,Y) \\ H_2(X,Y) \end{pmatrix}
	\approx \begin{pmatrix} F_1(\varepsilon,\varepsilon) \oslash G_1(\varepsilon,\varepsilon) & \varepsilon \\
	\varepsilon & F_2(\varepsilon,\varepsilon) \oslash G_2(\varepsilon,\varepsilon) \end{pmatrix}
	\otimes \begin{pmatrix} X \\ Y \end{pmatrix}.
\end{align*}
The fixed point $\bm{\varepsilon}$ is asymptotically stable if both $F_1(\varepsilon,\varepsilon) \oslash G_1(\varepsilon,\varepsilon)$ and $F_2(\varepsilon,\varepsilon) \oslash G_2(\varepsilon,\varepsilon)$ are negative, and it is unstable if either of them is positive.
\end{example}

\begin{example}
Let us consider a discrete event system on $n$ processors $P_1, P_2, \dots, P_n$ that process different products.
At each time step, a processor receives products from other processors and then processes their own products as many as possible, using one unit from each per unit of output.
Let $y_i^{(t)}$ be the cumulative number of products processed in $P_i$ during time steps $0,1,\dots,t$.
If the processor $P_i$ requires products from $P_{j_1}, P_{j_2}, \dots, P_{j_r}$, the number of products processed in $P_i$ up to times step $t+1$ is 
\begin{align*}
	y_i^{(t+1)} = \min_{k=1,2,\dots,r} (y_{j_k}^{(t)} - a_{i,j_k}(\bm{y}^{(t)})).
\end{align*}
Here, $a_{i,j_k}(\bm{y})$ represents changes in the product quantity due to other factors; when $a_{i,j_k}(\bm{y})$ is positive, it corresponds to losses occurring during the process, whereas when $a_{i,j_k}(\bm{y})$ is negative, it represents an external supply.
By setting $y_i^{(t)} = -x_i^{(t)}$ for $i=1,2,\dots,n$, this min-plus system can be switched to a max-plus (tropical) model as
\begin{align*}
	\bm{x}^{(t)} = A(\bm{x}^{(t)}) \otimes \bm{x}^{(t)}.
\end{align*}
If the limit $A:= \lim_{\bm{x} \to \bm{\varepsilon}} A(\bm{x})$ exists and the maximum eigenvalue of $A$ is negative, then the fixed point $\bm{\varepsilon}$ is asymptotically stable. In terms of the min-plus model for $\bm{y}^{(t)}$, the system operates continuously without encountering a deadlock when it starts from sufficiently large $\bm{y}^{(0)}$.
\end{example}

\section{Concluding remarks}

In this study, we proposed a tropical linear approximation approach for the stability analysis of difference equations at the tropical origin, namely, $\bm{\varepsilon}$. A natural question is how to expand this approach to any fixed points in $\mathbb{R}_{\max}^n$, especially in $\mathbb{R}^n$.
This case is very different from the case where $\bm{\varepsilon}$ is a fixed point.

Let us assume that $\bm{p} = (p_1,p_2,\dots,p_n)^\top \in \mathbb{R}^n$ is a fixed point of~\eqref{eq:dyn}. If $A \otimes \bm{x} \oplus \bm{b}$ is a tropical linear approximation of $\bm{f}$ at $\bm{p}$, then $\bm{p} = A \otimes \bm{p} \oplus \bm{b}$ should be satisfied.
This linear equation for $A = (a_{i,j})$ and $\bm{b} = (b_1,b_2,\dots,b_n)^\top$ can be solved as
\begin{align}
\begin{cases} a_{1,1} \leq 0 \\ a_{1,2} \leq p_1-p_2 \\ \qquad\vdots \\ a_{1,n} \leq p_1-p_n \\ b_1 \leq p_1 \end{cases},\ 
\begin{cases} a_{2,1} \leq p_2-p_1  \\ a_{2,2} \leq 0\\ \qquad\vdots \\ a_{2,n} \leq p_2-p_n \\ b_2 \leq p_2 \end{cases},\ 
\dots,\ 
\begin{cases} a_{n,1} \leq p_n-p_1  \\ a_{n,2} \leq p_n-p_2 \\ \qquad\vdots \\ a_{n,n} \leq 0\\ b_n \leq p_n \end{cases},
\label{eq:conclusion}
\end{align}
where at least one equality holds for each collection of inequalities.
Moreover, the term $a_{i,j} \otimes x_j$ contributes to evaluating $A \otimes \bm{x} \oplus \bm{b}$ around $\bm{p}$ only if $a_{i,j} = p_i-p_j$.
Hence, the tropical linear approximation is determined by the fixed point $\bm{p}$ itself rather than the infinitesimal behaviour around $\bm{p}$.

Furthermore, if a tropical linear difference equation $\bm{x}^{(t+1)} = A \otimes \bm{x}^{(t)} \oplus \bm{b}$ has a fixed point $\bm{p}$, then we have 
\begin{align*}
	\bm{p} = A \otimes \bm{p} \oplus \bigoplus_{k=0}^m A^{\otimes k} \otimes \bm{b}
\end{align*}
for any $m \in \mathbb{Z}_{\geq 0}$ by using $\bm{p} = A \otimes \bm{p} \oplus \bm{b}$ repeatedly.
When $\bm{p}$ and $\bm{b}$ are finite vectors, the matrix sequence $\{A^{\otimes k}\}_{k \in \mathbb{Z}_{\geq 0}}$ should be bounded from above.
This implies that the maximum eigenvalue of $A$ must be nonpositive. 
Therefore, the stability analysis based on the sign of the maximum eigenvalue would exhibit substantially different behavior.
Nonetheless, it would be interesting to investigate how the behavior differs when the maximum eigenvalue is negative versus when it is zero.
This is related to whether each inequality $a_{i,j} \leq p_i-p_j$ in~\eqref{eq:conclusion} is strict or not.
Hence, the stability at the finite fixed point might be explained in terms of the tropical linearization matrix $A$ as well, which is left for future study.

\section*{Acknowledgement(s)}

This work was supported by JSPS KAKENHI (Grant No.~22K13964).

\section*{Disclosure statement}

The authors report there are no competing interests to declare.

%\bibliographystyle{abbrv}
%\bibliography{reference}

\end{document}